\newcommand{\etal}{{et~al.}}
\newcommand{\ie}{{i.e.}}
\newcommand{\eg}{{e.g.}}
\newcommand{\area}{{\rm Area}}
\newcommand{\conv}{{\rm conv}}
\newcommand{\dist}{{\rm dist}}
\newcommand{\NN}{\mathbb{N}} 
\newcommand{\ZZ}{\mathbb{Z}} 
\newcommand{\RR}{\mathbb{R}} 
\newcommand{\eps}{\varepsilon}
\newtheorem{problem}[theorem]{Problem}
\def\C{\mathcal C}
\def\G{\mathcal G}
\def\R{\mathcal R}
\def\S{\mathcal S}
\let\oldnl\nl
\newcommand{\nonl}{\renewcommand{\nl}{\let\nl\oldnl}}
\def\TitleOfAlgo{\@ifnextchar({\@TitleOfAlgoAndComment}{\@TitleOfAlgoNoComment}}
\def\@TitleOfAlgoAndComment(#1)#2{\nonl\hspace*{-1.5em}#2 #1\;}
\def\@TitleOfAlgoNoComment#1{\nonl\hspace*{-1.5em}#1\;}
\title{Partitioning Complete Geometric Graphs on Dense Point Sets
  into Plane Subgraphs}
\titlerunning{Partitioning complete geometric graphs into plane subgraphs}
\author{Adrian Dumitrescu}
{Algoresearch L.L.C., Milwaukee, WI, USA}
{ad.dumitrescu@algoresearch.org}
{0000-0002-1118-0321}
{}
\author{J\'anos Pach}
       {Alfr\'ed R\'enyi Institute of Mathematics, Budapest, Hungary
         and EPFL, Lausanne, Switzerland}
{pach@renyi.hu}
{0000-0002-2389-2035}
{}
\authorrunning{Adrian Dumitrescu and J\'anos Pach}
\keywords{Convexity, complete geometric Graph, crossing Family, plane Subgraph}
\begin{document}

\maketitle

\begin{abstract}
A \emph{complete geometric graph} consists of a set $P$ of $n$ points
in the plane, in general position, and all segments (edges) connecting
them. It is a well known question of Bose, Hurtado, Rivera-Campo, and Wood,
whether there exists a positive constant $c<1$, such that every
complete geometric graph on $n$ points can be partitioned into at most
$cn$ plane graphs (that is, noncrossing subgraphs). We answer this
question in the affirmative in the special case where the underlying
point set $P$ is \emph{dense}, which means that the ratio between the
maximum and the minimum distances in $P$ is of the order of $\Theta(\sqrt{n})$.  
\end{abstract}

\section{Introduction} \label{sec:intro}

A set of points in the plane is said to be:
(i)~in \emph{general position} if no $3$ points are collinear; and
(ii)~in \emph{convex position} if none of the points lies in the convex hull of the other points.
For a set $A$ of $n$ points in the plane, consider the ratio
\[ D(A)= \frac{\max\{\dist(a,b) : a,b \in A, a \neq b\}}{\min\{\dist(a,b) : a,b \in A, a \neq b\}}, \]
where $\dist(a,b)$ is the Euclidean distance between points $a$ and $b$.
We assume throughout this paper and without loss of generality that the minimum pairwise distance is $1$.
In this case $D(A)$ is the diameter of $A$.
A standard volume argument shows that if $A$ has $n$ points, then $D(A) \geq \alpha_0 \, n^{1/2}$,
with
\begin{equation}\label{alfa0}
\alpha_0:= 2^{1/2} 3^{1/4} \pi^{-1/2} \approx 1.05,
\end{equation}
provided that $n$ is large enough; see~\cite[Prop.~4.10]{Va92}.
On the other hand, a $\sqrt{n} \times \sqrt{n}$ section of the integer lattice shows that
this bound is tight up to a constant factor.

Given $n$ points in the plane, in general position, the graph obtained by connecting certain point-pairs
by straight-line segments is called a \emph{geometric graph} $G$.
If no two segments (edges) of $G$ cross each other, then $G$ is a \emph{plane graph}.
A graph of the form $K_{1s}$, where $s \geq 0$, is a special plane graph, called a \emph{star};
in particular, a single vertex is a star with no leaves.
A graph in which every connected component is a star is called a \emph{star-forest};
see, \eg, \cite{AK82}.

Obviously, every complete geometric graph of $n$ vertices can be decomposed into $n-1$ plane stars.
In the present note, we address the following problem of Bose, Hurtado, Rivera-Campo, and Wood~\cite{BHRW06},
raised almost 20 years ago.

\begin{problem}\rm{\cite{BHRW06}} \label{prob:partition}
  Does there exist a positive constant $c<1$ such that every complete geometric graph on $n$ vertices
  can be partitioned into at most $cn$ plane subgraphs?
\end{problem}

An $n$-element point set $A$ satisfying the condition $D(A) \leq \alpha \, n^{1/2}$,
for some constant $\alpha\ge\alpha_0$, is said to be $\alpha$-\emph{dense}; see the works of
Edelsbrunner, Valtr, and Welzl~\cite{EVW97}, Kov\'{a}cs and T{\'{o}}th~\cite{KT20}, and Valtr~\cite{Va94}.
(Note, the larger $\alpha$ becomes, the ``less dense'' the set gets.)

Here, we solve Problem~\ref{prob:partition} for dense point sets.

\begin{theorem} \label{thm:partition}
  Let $A$ be an $\alpha$-dense point set of $n$ points in general position in the plane,
  and let $K_n=K_n[A]$ denote the complete geometric graph induced by $A$.
  Then (the edge set of) $K_n$ can be decomposed into at most $cn$ plane subgraphs,
  where $c=c(\alpha)<1$ is a constant. Specifically, we have
\begin{equation} \label{eq:c(alpha)}
  c(\alpha) \leq  1 - \Omega\left(\alpha^{-12}\right).
\end{equation}
Each of these plane graphs is either a star or a plane union of two stars.
\end{theorem}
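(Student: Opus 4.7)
The plan is to refine the trivial decomposition of $K_n[A]$ into $n-1$ plane stars --- obtained by iteratively removing a convex-hull vertex $v$ from the current point set $A' \subseteq A$ and adding the star $S_v$ centered at $v$ to the decomposition --- by performing \emph{double peels} whenever possible. A double peel removes two current hull vertices $u,v$ simultaneously and charges the edge $uv$ together with the two stars from $u$ and from $v$ to the remaining points $A' \setminus \{u,v\}$ to a single plane subgraph; the resulting subgraph is a plane union of two stars, as required by the theorem statement. Each double peel saves one plane subgraph compared with two single peels, so to prove $c(\alpha) \le 1 - \Omega(\alpha^{-12})$ it suffices to execute $\Omega(n/\alpha^{12})$ double peels during the process.

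First I would establish a geometric criterion for when a pair $(u,v)$ of current hull vertices is \emph{double-peelable}, i.e., $S_u \cup S_v \cup \{uv\}$ is plane. Since $u$ and $v$ lie on the current hull, their individual stars are plane automatically, and the only possible obstruction is a crossing between some edge $up$ and some edge $vq$ with $p,q \in A' \setminus \{u,v\}$ distinct. A clean sufficient condition is that $u$ and $v$ are consecutive on the convex hull of $A'$, and the perpendicular bisector of $uv$ partitions $A' \setminus \{u,v\}$ in a way consistent with a plane drawing of $K_{2,|A'|-2}$ (roughly, each remaining point $p$ is matched by the edges $up$ and $vp$ on the ``correct'' side). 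In particular, every sufficiently short hull edge $uv$ whose ``outer lens'' of suitable width is empty of other points of $A'$ yields a double-peelable pair.

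The density assumption is then used to lower-bound how often such pairs are available throughout the process. Because the minimum pairwise distance is $1$ and the diameter is at most $\alpha\sqrt{n}$, the point set is confined to a region of area $O(\alpha^2 n)$, so a packing argument guarantees that a positive fraction of points have a neighbor at distance $O(1)$ with nearly empty lens, giving many short candidate pairs. Combining this local density with an analysis tracking how the convex hull evolves under peeling yields the required $\Omega(n/\alpha^{12})$ double peels; the exponent $12$ reflects several chained dependencies on $\alpha$ (a disk-packing bound, a convex-hull isolation argument, and the diameter bound, each losing a polynomial factor in $\alpha$). The main obstacle is precisely this global accounting across the $\Theta(n)$ iterations: peeling depletes the hull and may destroy good pairs before they are used. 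A natural remedy is a preprocessing step that partitions $A$ into $\Omega(n/\alpha^{12})$ disjoint clusters of bounded diameter, each provably containing at least one double-peelable pair, and then schedules peelings cluster-by-cluster to guarantee one saved graph per cluster.
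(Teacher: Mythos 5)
Your plan breaks down at its basic building block, the ``double peel.'' If $u$ and $v$ are removed together, the single plane subgraph charged to this step must contain \emph{all} edges from $u$ and \emph{all} edges from $v$ to the remaining points (an uncovered such edge could never be picked up later, since both stars centered at its endpoints are gone). So the subgraph is $K_{2,|A'|-2}$ plus the edge $uv$, drawn with straight segments. Now take any two remaining points $p,q$ on the same side of the line through $u$ and $v$: the triangles $\Delta{uvp}$ and $\Delta{uvq}$ share the edge $uv$ and each contains a half-neighborhood of its midpoint on that side, so their interiors meet; hence either one triangle contains the other, or $up$ crosses $vq$ or $uq$ crosses $vp$. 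Planarity of the double star therefore forces the triangles $\Delta{uvp}$, over all $p$ on one side of the line, to form a chain under inclusion. This is a degenerate condition that dense sets do not satisfy: two points at (nearly) equal distance from the line $uv$ can never have nested triangles, and nesting would force all $m$ points on that side into a single triangle with base $|uv|$ and height at most $\alpha\sqrt n$, whence $|uv|=\Omega(m/(\alpha\sqrt n))$ by a packing argument --- the opposite of your ``sufficiently short hull edge'' criterion. Neither the perpendicular-bisector condition nor the empty-lens condition addresses this obstruction, and since a crossing can involve any two of the remaining points, double-peelability is a global property; the cluster-by-cluster scheduling proposed at the end cannot certify it locally.

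The paper sidesteps exactly this trap by using unions of two stars whose \emph{leaf sets are disjoint and geometrically separated}, rather than two stars into the same ground set. It finds four clusters $B_1,\dots,B_4$, each of size $m=\Omega(n/\alpha^{12})$, such that every point of $B_4$ lies in the triangle spanned by any choice of one point from each of $B_1,B_2,B_3$ (Lemma~\ref{lem:4rich}, via a grid of rich cells and a convex-hull/star-triangulation argument, which is where the exponent $12$ arises). The Pach--Saghafian--Schnider lemma (Lemma~\ref{lem:special}) then decomposes $K_{4m}[B]$ into $3m$ plane two-star forests --- for instance, a star from a point of $B_1$ into $B_1\cup B_2$ paired with a star from a point of $B_3$ into $B_3\cup B_4$ --- and Lemma~\ref{lem:combined} extends this to all of $A$ with ordinary single stars. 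Any peeling-style variant would have to pair stars with separated leaf sets in a similar way, which is essentially the paper's construction.
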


Let $A$ be a randomly and uniformly distributed set of $n$ points in the unit square. With probability tending to $1$,
as $n\rightarrow\infty$, the order of magnitude of the minimum distance in $A$ will be much smaller than $n^{-1/2}$.
Therefore, $D(A)$ will be larger than $\alpha \, n^{1/2}$, for every $\alpha$, provided that $n$ is sufficiently large,
and Theorem~\ref{thm:partition} cannot be applied to $K_n[A]$. Nevertheless, $A$ almost surely contains
a linear-size $\alpha'$-dense subset, for a suitable constant $\alpha'$, and we can easily deduce the following statement,
which is also implied by a result of Valtr~\cite[Thm.~14]{Va96} in conjunction with Lemma~\ref{lem:cf-partition} below.
In Section~\ref{sec:remarks} we provide an alternative proof of Corollary~\ref{cor:random}.

\begin{corollary} \label{cor:random}
 Let $A$ be a set of $n$ random points uniformly distributed in $[0,1]^2$, and let $n\rightarrow\infty$.
 There exists an absolute constant $c<1$ such that, with probability tending to $1$,
 the complete geometric graph induced by $A$ can be decomposed into at most $cn$ plane subgraphs.
\end{corollary}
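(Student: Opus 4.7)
The plan is to use Theorem~\ref{thm:partition} as a black box after extracting from the random point set $A$ a sub-sample $A'\subseteq A$ of linear size which is $\alpha'$-dense for some absolute constant $\alpha'$, and then to absorb the remaining edges of the complete geometric graph by a linear number of plane stars. Combining the $c(\alpha')|A'|$ plane subgraphs produced by Theorem~\ref{thm:partition} applied to $A'$ with $n-|A'|$ residual stars would yield a decomposition of total size at most $n-(1-c(\alpha'))|A'|$, which is bounded by a constant strictly less than $1$ times $n$ as soon as $|A'|=\Omega(n)$.

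I would construct $A'$ deterministically from the realization of $A$ as follows. Partition $[0,1]^2$ into a regular grid of $n$ cells of side $s=1/\sqrt n$ and mark the sub-collection of cells at positions $(3i,3j)$, so that any two marked cells are separated by a gap of at least $2s$ in at least one coordinate. From every marked cell that meets $A$, pick one representative point and put it into $A'$. By construction the minimum pairwise distance in $A'$ is at least $2/\sqrt n$ and the diameter of $A'$ is at most $\sqrt 2$, so once $|A'|\geq\beta n$ for some absolute constant $\beta>0$ is established, the estimate
\[
D(A')\leq\frac{\sqrt 2}{2/\sqrt n}=\sqrt{n/2}\leq\frac{1}{\sqrt{2\beta}}\sqrt{|A'|}
\]
would certify that $A'$ is $\alpha'$-dense with $\alpha'=1/\sqrt{2\beta}$. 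To absorb the edges of the complete geometric graph with at least one endpoint in $A\setminus A'=\{v_1,\ldots,v_m\}$, I would enumerate these points and take, for each $i$, the plane star centered at $v_i$ and spanning $\{v_{i+1},\ldots,v_m\}\cup A'$; these $m=n-|A'|$ stars partition the residual edge set exactly. General position of $A$ holds with probability $1$, so Theorem~\ref{thm:partition} is applicable to $A'$ verbatim.

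The only non-trivial step is the probabilistic one, namely establishing that $|A'|\geq\beta n$ with probability tending to $1$. Each marked cell has area $1/n$ and hence contains at least one point of $A$ with probability $1-(1-1/n)^n\to 1-e^{-1}$, so $\E|A'|=\Theta(n)$. To promote this to a high-probability bound I would use that the occupancy indicators of distinct marked cells are pairwise negatively correlated with covariance of order $O(1/n)$, which follows from a direct calculation in the multinomial occupancy model; Chebyshev's inequality applied to their sum then forces $|A'|$ to be concentrated around its mean and yields the desired $\Omega(n)$ lower bound with probability $1-o(1)$. No Chernoff-type tool or ad hoc dependency argument is required, and the remainder of the proof reduces to the bookkeeping indicated above.
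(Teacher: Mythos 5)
Your proposal is correct, but it takes a different route from the proof the paper actually writes out. The paper's proof (Section~\ref{sec:remarks}, item~D) is much more direct: it fixes four specific subsquares $\sigma_1,\dots,\sigma_4$ of $[0,1]^2$ whose relative positions already force the Carath\'eodory-type configuration of Lemma~\ref{lem:4rich} (every point of $\sigma_4$ lies in the triangle spanned by any choice of points from $\sigma_1,\sigma_2,\sigma_3$), observes that each subsquare receives at least $n/50$ points with probability tending to $1$, and then concludes via Lemma~\ref{lem:special} and Lemma~\ref{lem:combined}, bypassing Theorem~\ref{thm:partition} entirely. You instead implement the route the paper only alludes to in the introduction: extract a linear-size $\alpha'$-dense subset $A'$ (via the every-third-cell grid construction, with occupancy concentration from negative correlation plus Chebyshev), apply Theorem~\ref{thm:partition} to $A'$ as a black box, and absorb the remaining edges with $n-|A'|$ plane stars exactly as in Lemma~\ref{lem:combined}. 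All the steps check out: the separation $2/\sqrt n$ between marked cells and the diameter bound $\sqrt2$ do give $D(A')\le\sqrt{|A'|}/\sqrt{2\beta}$ once $|A'|\ge\beta n$, the resulting $\alpha'=1/\sqrt{2\beta}$ exceeds $\alpha_0$ since $\beta<1/9$, and the final count $n-(1-c(\alpha'))|A'|$ is at most $cn$ for an absolute $c<1$. The trade-off is that your argument routes through the full machinery of Theorem~\ref{thm:partition} (including the star-triangulation argument behind Lemma~\ref{lem:4rich}) and needs a concentration step, whereas the paper's direct construction needs only the trivial bound that a fixed-area subsquare is non-deficient with high probability; on the other hand, your argument isolates the cleaner general principle that any point set containing a linear-size dense subset in general position admits such a decomposition, which is reusable beyond the uniform model.
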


There is an intimate relationship between the above problem and another old question in combinatorial geometry,
due to Aronov, Erd\H{o}s, Goddard, Kleitman, Klugerman, Pach, and Schulman~\cite{AEG+91}.
Two segments are said to \emph{cross} each other if they do not share an endpoint and they have
an interior point in common.

\begin{problem}\rm{\cite{AEG+91}} \label{prob:crossing}
  Does there exist a positive constant $c<1/2$ such that every complete geometric graph on $n$ vertices
  has $cn$ pairwise crossing edges?
\end{problem}

In the general case, Pach, Rubin, and Tardos~\cite{PRT19} established the existence of at least
$n/2^{O(\sqrt{\log{n}})}=n^{1-o(1)}$ pairwise crossing edges.
For dense point sets, a better, but still sublinear, lower bound was established by~Valtr\cite{Va96}.
From the other direction, Aichholzer,  Kyn{\v{c}}l, Scheucher, Vogtenhuber, and Valtr~\cite{AKS+22}
constructed $n$-element point sets that do not contain more than $8 \lceil \frac{n}{41} \rceil$
pairwise crossing edges; see also~\cite{ES19}.

Problems~\ref{prob:partition} and~\ref{prob:crossing} are connected by the following simple,
but important finding of Bose~\etal~\cite{BHRW06}.

\begin{lemma} \label{lem:cf-partition} {\rm \cite{BHRW06}}
  If a complete geometric graph $K_n$ of $n$ vertices has $p$ pairwise crossing edges,
  then $K_n$ can be partitioned into $n-p$ plane trees and, hence, into $n-p$ plane subgraphs.
\end{lemma}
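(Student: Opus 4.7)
The plan is to construct an explicit partition of $E(K_n)$ into $n-p$ plane trees. Label the pairwise crossing edges as $e_i=a_ib_i$ for $i=1,\ldots,p$, let $V'=\{a_1,b_1,\ldots,a_p,b_p\}$ be their $2p$ distinct endpoints, and enumerate the remaining vertices as $w_1,\ldots,w_{n-2p}$.

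First I would establish that $V'$ is in convex position, with its points appearing on $\partial\conv(V')$ in the cyclic order $a_1,a_2,\ldots,a_p,b_1,b_2,\ldots,b_p$ after possibly swapping some $a_i\leftrightarrow b_i$. Convex position is a standard consequence of pairwise crossing: each pair of $e_i$'s has four convex-position endpoints, so every four-element subset of $V'$ is in convex position, and a Carath\'eodory-style argument upgrades this to all of $V'$. Moreover, $p$ pairwise crossing chords of a convex $2p$-gon must connect antipodal pairs, so the $e_i$ are exactly the $p$ ``diameters'' of this polygon.

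Next, I would apply the Bernhart--Kainen zigzag decomposition of the complete graph on this convex $2p$-gon into $p$ plane Hamiltonian paths $H_1,\ldots,H_p$, each containing exactly one diameter; by reindexing we may assume $e_i\in H_i$. For $j=1,\ldots,n-2p$, let $S_j$ be the star centered at $w_j$ with leaves $V'\cup\{w_{j+1},\ldots,w_{n-2p}\}$. Each $S_j$ is a plane tree (every star is plane), and the $H_i$ together with the $S_j$ partition $E(K_n)$: edges within $V'$ lie in the $H_i$, while any edge incident to some $w_j\notin V'$ lies in $S_j$ for its lowest-indexed such endpoint. This produces $p+(n-2p)=n-p$ plane trees in total.

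The main obstacle is the bookkeeping in the Bernhart--Kainen step --- verifying that each $e_i$ lands in a distinct $H_i$. This reduces to the fact that each zigzag path contains a unique chord of maximum length (its unique ``diameter''), and that these $p$ diameters are exactly our $p$ pairwise crossing edges, via the convex-position setup above. The convex-position argument for $V'$, while classical, also merits a brief separate verification.
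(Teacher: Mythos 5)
Your proposal has a fatal gap at its very first step: the $2p$ endpoints of $p$ pairwise crossing segments need \emph{not} be in convex position once $p\ge 3$. Your justification conflates two things: pairwise crossing gives convex position only for the special four-element subsets of the form $\{a_i,b_i,a_j,b_j\}$, not for an arbitrary four-element subset of $V'$, so the Carath\'eodory-style upgrade has nothing to feed on. A concrete counterexample: take $e_1$ from $(-1,0)$ to $(1,0)$, $e_2$ from $(0,1)$ to $(0,-1)$, and $e_3$ from $(-0.4,\,0.45)$ to $(0.4,\,-0.35)$. These three segments pairwise cross near the origin, yet both endpoints of $e_3$ lie strictly inside the convex hull of the endpoints of $e_1$ and $e_2$. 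Since convex position fails, the Bernhart--Kainen zigzag decomposition is unavailable, and the entire plan (identifying the $e_i$ with the ``diameters'' of a convex $2p$-gon and distributing them among plane Hamiltonian paths) collapses. Your handling of the vertices outside $V'$ via nested stars $S_j$ is fine, but it is the easy half of the argument.

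For reference, the paper does not prove this lemma; it quotes it from Bose, Hurtado, Rivera-Campo, and Wood \cite{BHRW06}, whose argument avoids any convex-position claim. The idea there is to cover the edges inside $V'$ by $p$ plane \emph{double stars}: for each $i$, take the edge $a_ib_i$ together with edges from $a_i$ to the relevant vertices lying on one side of the line through $a_ib_i$ and from $b_i$ to those on the other side; such a double star is a plane tree, every edge between endpoints of two crossing edges $e_i,e_j$ is incident to both $e_i$ and $e_j$ and hence gets covered, and duplicates can be discarded. Combined with the $n-2p$ stars at the remaining vertices this yields $n-p$ plane trees. If you want to salvage your write-up, replacing the convex-position/zigzag step by this double-star construction is the way to do it.
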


In view of this statement, a positive answer to Problem~\ref{prob:crossing} would immediately imply
our Theorem~\ref{thm:partition}. Lacking such an answer, we need to take a different approach,
which is described in the next section.
\smallskip

All point sets appearing in this note are in general position, and the logarithms are in base~$2$.
For any triple of points $a,b,c$, let $\Delta{abc}$ denote the triangle with vertices $a,b,c$.

\section{Proof of Theorem~\ref{thm:partition}} \label{sec:proofs}

In this section we prove Theorem~\ref{thm:partition}.
We start with a basic observation.

\begin{lemma} \label{lem:combined}
  Let $A$ be a set of $n$ points in general position in the plane, and let $B\subseteq A$ where $|B|=b$.
  Suppose that the complete geometric graph $K_{b}[B]$ induced by $B$ can be decomposed into $b-p$
  plane subgraphs, for some $p \geq 1$.
  Then $K_n[A]$, the complete geometric graph induced by $A$, can be decomposed into $n-p$ plane subgraphs.
\end{lemma}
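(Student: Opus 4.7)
The plan is to take the hypothesized decomposition of $K_b[B]$ into $b-p$ plane subgraphs and extend it by adding one plane star for each point of $A\setminus B$; since there are exactly $n-b$ such points, this will yield $(b-p)+(n-b)=n-p$ plane subgraphs in total.

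More concretely, first I would fix an arbitrary enumeration $A\setminus B=\{v_1,v_2,\ldots,v_{n-b}\}$ of the points outside $B$. For each index $i\in\{1,\ldots,n-b\}$ I would define the geometric graph $S_i$ to be the star centered at $v_i$ whose edge set consists of all segments $v_i w$ with $w\in A\setminus\{v_1,\ldots,v_i\}$. By construction, every edge of $K_n[A]$ with at least one endpoint in $A\setminus B$ lies in exactly one of the $S_i$'s: if the edge is $v_i v_j$ with $i<j$, then it belongs to $S_i$ and to no other star; if the edge is $v_i u$ with $u\in B$, then again it belongs only to $S_i$. The remaining edges of $K_n[A]$ are precisely the edges of $K_b[B]$.

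Combining the decomposition of $K_b[B]$ into $b-p$ plane subgraphs (given by hypothesis) with the $n-b$ stars $S_1,\ldots,S_{n-b}$, I obtain a partition of the edge set of $K_n[A]$ into $(b-p)+(n-b)=n-p$ subgraphs. Each $S_i$ is a star, hence automatically plane, and the subgraphs from the decomposition of $K_b[B]$ are plane by assumption. This will establish the lemma.

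There is no serious obstacle here: the only thing to verify is that the stars $S_i$ collectively cover, without overlap, all edges of $K_n[A]$ not contained in $K_b[B]$, which follows immediately from the chosen ordering. The role of the lemma is purely to package this simple observation so that subsequent arguments may improve $p$ by exhibiting a sufficiently good decomposition on a carefully chosen dense subset $B$.
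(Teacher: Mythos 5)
Your proof is correct and follows essentially the same approach as the paper: the paper also adds the $n-b$ stars centered at the points of $A\setminus B$ to the given decomposition of $K_b[B]$ and then deletes duplicate edges. Your ordering of the stars to avoid double-covering edges within $A\setminus B$ is just a slightly more explicit way of performing that deletion.
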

\begin{proof}
  Consider the $n-b$ stars centered at points in $A \setminus B$ together with the $b-p$ plane
  subgraphs in the  decomposition of  $K_{b}[B]$, and delete duplicate edges. 
\end{proof}

In view of Lemma~\ref{lem:combined}, to establish Theorem~\ref{thm:partition}, it is enough to find 
a large subset $B\subseteq A$ that can be decomposed into relatively few plane graphs.
Instead of Lemma~\ref{lem:cf-partition}, we use the following result, whose proof is included for completeness.

\begin{lemma} \label{lem:special} {\rm (Pach, Saghafian, and Schnider~\cite{PSS23})}.
  Let $B = \bigcup_{i=1}^4 B_i$ be a set of $4m$ points in general position in the plane,
  where $|B_1|=|B_2|=|B_3|=|B_4| = m$, such that for every choice $p_i \in B_i$, for $i=1,2,3,4$,
  we have that $p_4$ lies inside the convex hull of $\{p_1,p_2,p_3\}$. 
  Then the complete geometric graph $K_{4m}[B]$ induced by $B$ can be decomposed into at most $3m$ plane subgraphs.
\end{lemma}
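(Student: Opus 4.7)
The plan is to produce $m$ pairwise crossing edges in $K_{4m}[B]$ and then invoke Lemma~\ref{lem:cf-partition} with $n=4m$ and $p=m$, which yields the required decomposition into $4m-m=3m$ plane subgraphs.

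The first step distills a clean structural consequence of the hypothesis: \emph{for every $p_2\in B_2$ and every $p_4\in B_4$, the line $p_2p_4$ strictly separates $B_1$ from $B_3$.} Indeed, fix an arbitrary pair $p_1\in B_1$, $p_3\in B_3$. Since $p_4$ lies in the interior of $\Delta p_1 p_2 p_3$, the ray from $p_2$ through $p_4$ must exit the triangle across the opposite side $p_1p_3$; hence $p_1$ and $p_3$ lie on opposite sides of the line $p_2p_4$. As $p_1,p_3$ were arbitrary, $B_1$ is contained in one open half-plane bounded by this line and $B_3$ in the other.

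The second step turns this separation into the crossing family. Fix any $p_2^*\in B_2$, $p_4^*\in B_4$, and let $\ell$ be the line through them. The set of lines separating $B_1$ from $B_3$ is open, so by an arbitrarily small rotation of $\ell$ within it we may assume the orthogonal projections onto $\ell$ of the $2m$ points of $B_1\cup B_3$ are pairwise distinct while the separation is preserved. Enumerate $B_1=\{x_1,\ldots,x_m\}$ and $B_3=\{y_1,\ldots,y_m\}$ in increasing order of projection onto $\ell$, and consider the reverse matching $M=\{x_i y_{m+1-i}:1\le i\le m\}$. For any $1\le i<j\le m$, the four points $\{x_i,x_j,y_{m+1-j},y_{m+1-i}\}$ lie in convex position (two on each side of $\ell$, with distinct projections), and their convex hull, traversed counterclockwise from $x_i$, has cyclic order $x_i,\,y_{m+1-j},\,y_{m+1-i},\,x_j$. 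The two matched edges $x_i y_{m+1-i}$ and $x_j y_{m+1-j}$ are therefore the two diagonals of this quadrilateral, and hence cross. Thus $M$ is a family of $m$ pairwise crossing edges in $K_{4m}[B]$, and Lemma~\ref{lem:cf-partition} completes the proof.

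The only delicate point is the assertion that the reversed matching always lands the two chosen edges in the diagonal positions of the convex hull, regardless of how the projections of $B_1$ and $B_3$ may interleave along $\ell$. This is essentially topological: when two points lie strictly above $\ell$ and two strictly below, the convex hull is a quadrilateral whose counterclockwise boundary visits the two ``above'' points together and the two ``below'' points together (the line $\ell$ meets at most two hull edges), so reversing the index along one side automatically puts the two matched edges into the two diagonal slots of each such quadrilateral.
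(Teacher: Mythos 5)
Your first step is correct: for every $p_2\in B_2$ and $p_4\in B_4$, the line $p_2p_4$ does strictly separate $B_1$ from $B_3$. The argument breaks down exactly at the step you flag as delicate: it is \emph{not} true that two points strictly above $\ell$ and two strictly below $\ell$, with pairwise distinct projections, must be in convex position. Take $\ell$ to be the $x$-axis, $B_1=\{(-1,10),(1,10)\}$ and $B_3=\{(0,-10),(0.01,-9)\}$. All four projections are distinct, yet $(0.01,-9)$ lies inside the triangle spanned by the other three points (at height $y=-9$ that triangle spans $-0.05\le x\le 0.05$). Hence no two vertex-disjoint segments among these four points cross; in particular the reverse matching, consisting of the segments $(-1,10)(0.01,-9)$ and $(1,10)(0,-10)$, is non-crossing, and so is the forward matching. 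This configuration is realizable under the hypotheses of the lemma: put $B_2$ in a tiny disk around $(100,0)$ and $B_4$ in a tiny disk around $(30,0)$; then every point of $B_4$ lies in every triangle $p_1p_2p_3$, and the line $p_2^*p_4^*$ is essentially the $x$-axis. So your construction does not produce $m$ pairwise crossing edges; worse, since one point of $B_1\cup B_3$ lies inside the triangle of the other three, \emph{no} perfect matching between $B_1$ and $B_3$ is a crossing family here, so no reordering or re-choice of $\ell$ can rescue this route. Your topological justification (``the line $\ell$ meets at most two hull edges'') only shows that \emph{if} all four points are on the hull, then the two upper points are consecutive on it; it does not show that the hull is a quadrilateral.

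The gap is not cosmetic. Reducing the lemma to Lemma~\ref{lem:cf-partition} requires exhibiting $m$ pairwise crossing edges, and whether the hypotheses of Lemma~\ref{lem:special} guarantee such a crossing family at all is unclear --- this is essentially the difficulty behind Problem~\ref{prob:crossing}, which is open even for dense sets. The paper sidesteps crossing families entirely: it writes $K_{4m}[B]$ as a union of $4m$ stars and groups them into $3m$ plane star forests, in three families of $m$ pairs each (stars rooted in $B_1$ toward $B_1\cup B_2$ paired with stars rooted in $B_3$ toward $B_3\cup B_4$, and two analogous families), using the hypothesis only to check that the two stars in each pair cannot cross. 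If you want to keep a crossing-family approach, you need a genuinely different construction; otherwise the direct star-forest decomposition is the way to go.
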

\begin{proof}
  We decompose the complete geometric graph $K_{4m}[B]$ into $3m$ plane star-forests,
  which come in three families; see Fig.~\ref{fig:3m}:

\begin{enumerate} \itemsep 1pt
\item all stars emanating from points in $B_1$ connecting to all points in $B_1$ and $B_2$
  together with all stars emanating from points in $B_3$ connecting to all points in $B_3$ and $B_4$
\item all stars emanating from points in $B_2$ connecting to all points in $B_2$ and $B_3$
  together with all stars emanating from points in $B_4$ connecting to all points in $B_4$ and $B_1$
\item all stars emanating from points in $B_1$ connecting to all points in $B_1$ and $B_3$
  together with all stars emanating from points in $B_2$ connecting to all points in $B_2$ and $B_4$
\end{enumerate}

\begin{figure}[htbp]
\centering
\includegraphics[scale=0.75]{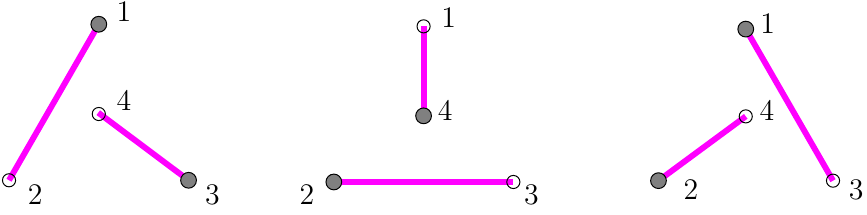}
\caption {Sketch of the $3m$ plane subgraphs in Lemma~\ref{lem:special}.}
\label{fig:3m}
\end{figure}

Observe that these stars cover all edges of $K_{4m}[B]$.
The first family is the union of $m$ plane subgraphs: indeed,
no star connecting a point in $B_1$  to every point in $B_1$ and $B_2$
crosses any star connecting a point in $B_3$ to every point in $B_3$ and $B_4$ by the assumption.
Therefore, these stars can be grouped in pairs such that each pair forms a plane \emph{star forest}.

Similarly, the second and third families also consist of $m$ plane subgraphs, each.
Removing duplicate edges, we obtain a decomposition of $K_{4m}[B]$ into $3m$ plane star forests.
\end{proof}

We show that every $\alpha$-dense $n$-element point set $A$ contains a subset $B$ satisfying the 
conditions in Lemma~\ref{lem:special} with $m=\Omega(n)$.  
The overall idea is to find four large (linear-size) subsets of $A$ clustered around four points \emph{not}
in convex position, as depicted in Fig.~\ref{fig:grid}\,(right). Once this favorable configuration is found,
it yields a partition of the corresponding complete geometric graph into a small number of plane subgraphs.
This partition is extended to a partition of the complete geometric graph of the original set into a small number
of plane subgraphs. We next provide the details.

Let $k=k(\alpha) \geq 3 \alpha^2$ be an increasing function of $\alpha$ to be specified later, and set
\begin{equation} \label{eq:parameters}
  n_0 = \lceil 12 k^2/\alpha^2 \rceil.
\end{equation}
We may assume without loss of generality that $k(\alpha)$ takes integer values
(by applying the ceiling function if needed). 
We distinguish between two cases: $n \leq n_0$, and $n \geq n_0$.
Suppose first, that $n \leq n_0$.
Recall that there is a decomposition of $K_n[A]$ into $n-1$ plane subgraphs that are stars.
Note that $n-1 \leq c n$ for $n \leq n_0$ provided that $c<1$ is large enough: indeed,
$n(1-c) \leq n_0(1-c) \leq 1$ for $c \geq 1 -1/n_0$.

Suppose next, that $n \geq n_0$.
Let $A$ be an $n$-element $\alpha$-dense set.
Since $D(A) \leq \alpha \sqrt{n}$, we may assume that $A$ is contained in an axis-aligned square $Q$
of side-length $\alpha \sqrt{n}$.
Subdivide $Q$ into $k^2$ axis-parallel squares, called \emph{cells},
of side-length $\alpha \sqrt{n}/k$. Let $\Sigma$ be the set of all $k^2$ cells in $Q$.
We may assume without loss of generality that no point in $A$ lies on a cell boundary.

\begin{lemma} \label{lem:cell-ub}
Each cell in $\Sigma$ contains at most $\frac{2 \alpha^2}{k^2} \, n$ points of $A$.
\end{lemma}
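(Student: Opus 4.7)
The plan is a direct area-packing argument. Let $\sigma$ be an arbitrary cell in $\Sigma$, of side length $s := \alpha\sqrt{n}/k$, and write $m := |A \cap \sigma|$. Since the minimum pairwise distance in $A$ equals $1$ under our normalization, the $m$ open disks of radius $1/2$ centered at the points of $A\cap\sigma$ are pairwise disjoint, and each is contained in the concentric enlarged square $\sigma^+$ of side $s+1$ obtained by dilating $\sigma$ by $1/2$ on every side. Comparing areas gives the basic bound
\[
  m \cdot \frac{\pi}{4} \;\leq\; (s+1)^2,
  \qquad \text{i.e.,} \qquad
  m \;\leq\; \frac{4(s+1)^2}{\pi}.
\]

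To convert this into the desired inequality $m \leq 2s^2 = \frac{2\alpha^2}{k^2} n$, I would invoke the standing hypothesis $n \geq n_0 = \lceil 12 k^2/\alpha^2 \rceil$, which forces $s^2 = \alpha^2 n/k^2 \geq 12$ and hence $s \geq 2\sqrt{3}$. Writing $(s+1)^2 = s^2(1+1/s)^2$ and substituting the bound on $s$, the factor $(4/\pi)(1+1/s)^2$ is close to the target value $2$. A brief numerical check -- or, to leave more room, an appeal to the classical hexagonal packing density $\pi/(2\sqrt{3})$ for congruent disks (which replaces the prefactor $4/\pi$ by $2/\sqrt{3}$) -- then closes out the inequality.

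The principal technicality is the slight gap between the crude area prefactor $4/\pi \approx 1.27$, multiplied by $(1+1/s)^2$ evaluated at $s = 2\sqrt{3}$, and the clean target constant $2$. Either the sharpened density $2/\sqrt{3} \approx 1.15$ or, alternatively, a marginal strengthening of the constant in \eqref{eq:parameters} removes this gap; neither refinement involves deep geometry and each reduces to a few lines of elementary calculation built on the lower bound $s \geq 2\sqrt{3}$. The lemma then follows at once.
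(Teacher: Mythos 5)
Your proposal is correct and follows essentially the same route as the paper: enlarge the cell by $1/2$ on each side, pack the pairwise disjoint radius-$1/2$ disks into the resulting square of side $s+1$, and use $n\ge n_0$ (hence $s\ge\sqrt{12}$) to absorb the $\left(1+1/s\right)^2$ factor. You are right that the crude prefactor $4/\pi$ does not quite suffice (it gives about $2.11\,s^2$ at $s=2\sqrt{3}$); the paper closes the gap exactly via your fallback, invoking the hexagonal packing density $\pi/\sqrt{12}$ for the enlarged square (justified there because it is a tiling domain), which yields $\frac{2}{\sqrt{3}}\left(1+\frac{1}{\sqrt{12}}\right)^2 s^2 \le 2s^2$.
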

\begin{proof}
  Let $\sigma \subset Q$ be any cell and $\sigma'$ be the axis-aligned square concentric with $\sigma$
  and whose side-length is $\frac{\alpha \sqrt{n}}{k} + 1$.
  Obviously,  $\sigma'$ contains all disks of radius $1/2$ centered at the points of $A \cap \sigma$.
  Moreover, since $A$ is $\alpha$-dense, these $n$ disks are interior disjoint.
Moreover, $\sigma'$ is a so-called \emph{tiling domain}, \ie, a domain that can be used to
tile the whole plane. Let $m$ denote the number of points of $A \cap \sigma$.
A packing of $m$ congruent disks of radius $1/2$ in $\sigma'$ requires~\cite[Ch.~3.4]{FFK23} 
that $m \frac{\pi}{4} \leq \frac{\pi}{\sqrt{12}} \, \area(\sigma')$,
which yields (by using that $n \geq n_0$):
\[ m \leq \frac{2}{\sqrt3} \left( \frac{\alpha \sqrt{n}}{k} + 1 \right)^2 \leq
\frac{2}{\sqrt3} \left( 1 + \frac{1}{\sqrt{12}} \right)^2 \frac{\alpha^2}{k^2} \, n
\leq \frac{2 \alpha^2}{k^2} \, n. \qedhere \]
\end{proof}

\smallskip
A cell $\sigma \in \Sigma$ is said to be \emph{rich} if it contains at least $n/(3k^2)$ points of $A$,
and \emph{poor} otherwise. Let $\R \subset \Sigma$ denote the set of rich cells.

\begin{lemma} \label{lem:rich-lb}
  There are at least $\frac{k^2}{3\alpha^2}$ rich cells; that is, $|\R| \geq \frac{k^2}{3\alpha^2}$.
\end{lemma}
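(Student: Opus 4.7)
The plan is a standard double-counting / averaging argument played off Lemma~\ref{lem:cell-ub}. By definition, each poor cell contributes fewer than $n/(3k^2)$ points of $A$. Summing over at most $k^2$ poor cells, the total number of points contained in poor cells is strictly less than $k^2 \cdot \frac{n}{3k^2} = \frac{n}{3}$. Therefore the number of points of $A$ contained in rich cells is strictly greater than $\frac{2n}{3}$.

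On the other hand, Lemma~\ref{lem:cell-ub} bounds the population of every cell (rich or poor) by $\frac{2\alpha^2}{k^2}n$. Applied to the rich cells only, this yields
\[
\frac{2n}{3} \;<\; \sum_{\sigma \in \R} |A \cap \sigma| \;\leq\; |\R| \cdot \frac{2\alpha^2}{k^2}\, n.
\]
Rearranging gives $|\R| > \frac{k^2}{3\alpha^2}$, and since $|\R|$ is an integer, the desired inequality $|\R| \geq \frac{k^2}{3\alpha^2}$ follows at once.

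I do not foresee any real obstacle here: the bounds provided by the definition of ``poor'' and by Lemma~\ref{lem:cell-ub} are exactly complementary, so the constants $1/3$ and $2$ from the two previous estimates combine cleanly to produce the claimed $\frac{1}{3\alpha^2}$ factor. The only minor care needed is to note that the strict inequality obtained from the poor-cell bound is what allows us to pass from ``$>$'' to ``$\geq$'' after invoking integrality of $|\R|$; equivalently, one can absorb the constant by using $n \geq n_0$ as in the proof of Lemma~\ref{lem:cell-ub}, but this is not necessary.
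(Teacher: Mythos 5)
Your proof is correct and is essentially the paper's argument: the paper runs the same double count (poor cells contribute less than $n/3$, rich cells at most $|\R|\cdot\frac{2\alpha^2}{k^2}n$ by Lemma~\ref{lem:cell-ub}) phrased as a proof by contradiction rather than a direct rearrangement. The closing remark about integrality is superfluous, since the strict inequality $|\R|>\frac{k^2}{3\alpha^2}$ already implies $|\R|\geq\frac{k^2}{3\alpha^2}$.
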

\begin{proof}
Let $r=|\R|$ denote the number of rich cells. Assume for contradiction that $r < \frac{k^2}{3\alpha^2}$.
By Lemma~\ref{lem:cell-ub} the total number of points of rich cells is at most
\[ r \cdot \frac{2 \alpha^2}{k^2} \, n < \frac{k^2}{3\alpha^2} \cdot \frac{2 \alpha^2}{k^2} \, n = \frac23 \, n. \]
The total number of points of poor cells is less than
\[ k^2 \cdot \frac{n}{3k^2} = \frac{n}{3}. \]
Thus the total number of points of $A$ is strictly less than $n$, a contradiction that \linebreak
completes the proof.
\end{proof}

\begin{lemma} \label{lem:4rich}
There exist four rich cells $\sigma'_1,\sigma'_2,\sigma'_3,\sigma'_4$, such that:
  \begin{itemize}
  \item for any four points $a_i \in \sigma'_i \cap A$, $i=1,2,3,4$,
    we have $a_4 \in \Delta{a_1 a_2 a_3}$.
    \end{itemize}
\end{lemma}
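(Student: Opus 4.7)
The plan is to first find three rich cells $\sigma'_1,\sigma'_2,\sigma'_3$ whose centres form a sufficiently large and ``fat'' triangle $T$, and then a fourth rich cell $\sigma'_4$ sitting deep enough inside $T$ that perturbing each of $a_1,a_2,a_3$ within its cell cannot push $a_4$ out of $\Delta a_1a_2a_3$. The main obstacle will be ensuring the existence of such an interior rich cell.

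For the first step I use Lemma~\ref{lem:cell-ub} in the contrapositive: a rectangular strip of width $w$ inside $Q$ meets $O(wk^2/(\alpha\sqrt n))$ cells and hence at most that many rich cells. Since $|\R|\ge k^2/(3\alpha^2)$, every strip containing all rich cells has width $\Omega(\sqrt n/\alpha)$. Applied first to vertical strips, this yields two rich cells with centres $c_1,c_2$ at $x$-distance $\Omega(\sqrt n/\alpha)$; applied to strips parallel to $\overline{c_1c_2}$, it yields a rich cell with centre $c_3$ at distance $\Omega(\sqrt n/\alpha)$ from that line. Thus $T=\Delta c_1c_2c_3$ has area $\Omega(n/\alpha^2)$ and all three altitudes of order $\Omega(\sqrt n/\alpha)$.

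For the second step I would work with the convex hull $H$ of \emph{all} rich-cell centres rather than with $T$ alone. Because the centres lie in distinct cells, their pairwise distances are at least $s := \alpha\sqrt n/k$, so the disks of radius $s/2$ around them are pairwise disjoint and $\area(H)=\Omega(n)$ once $k$ is a sufficiently large multiple of $\alpha^2$. Fix a large absolute constant $C$ and set $D := C s$. The $D$-neighbourhood of $\partial H$ has area $O(\per(H)\cdot D)=O(\alpha^2 n/k)$ and meets only $O(k)$ cells, so since $|\R|=\Omega(k^2/\alpha^2)\gg k$, some rich-cell centre $c_4$ lies at depth at least $D$ inside $H$. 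Being strictly interior to $H$, $c_4$ sees the hull vertices (all of which are rich-cell centres) surrounding it with every angular gap strictly less than $180^\circ$; a careful pigeonhole selection produces three hull vertices that we relabel $c_1,c_2,c_3$ (overwriting Step 1) so that $c_4$ is at distance $\Omega(D)$ from every edge of $\Delta c_1c_2c_3$. Taking $C$ large enough, this distance is at least $s\sqrt 2$.

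Step 3, robustness, is then immediate: each $a_i\in\sigma'_i\cap A$ differs from $c_i$ by at most $s\sqrt 2/2$, so each edge of $\Delta a_1a_2a_3$ differs by at most $s\sqrt 2/2$ in normal distance from the corresponding edge of $T=\Delta c_1c_2c_3$; hence any point at distance $\ge s\sqrt 2$ on the inner side of every edge of $T$ is strictly inside $\Delta a_1a_2a_3$. Since $\sigma'_4$ is contained in the disk of radius $s\sqrt 2/2$ about $c_4$, every $a_4\in\sigma'_4\cap A$ lies inside $\Delta a_1a_2a_3$, as required. The most delicate point is the pigeonhole selection in Step 2, which converts depth of $c_4$ inside $H$ into depth of $c_4$ inside a specific rich-cell triangle; it forces the polynomial dependence of $k$ on $\alpha$ and, through Lemmas~\ref{lem:combined} and~\ref{lem:special}, the $\alpha^{-12}$ exponent appearing in Theorem~\ref{thm:partition}.
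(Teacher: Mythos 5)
Your Step~3 (the robustness/perturbation argument) is sound, and your Step~2 correctly produces a rich-cell centre $c_4$ at depth $\Omega(D)$ inside $H=\conv(\text{rich centres})$. But the crucial claim in Step~2 --- that ``a careful pigeonhole selection produces three hull vertices $c_1,c_2,c_3$ so that $c_4$ is at distance $\Omega(D)$ from every edge of $\Delta c_1c_2c_3$'' --- is false as stated, and this is precisely the hard part of the lemma. Counterexample: let $H$ be a square with vertices $(\pm R,0),(0,\pm R)$ (all other rich centres strictly inside) and let $c_4$ be the centre. Then $c_4$ has depth $R/\sqrt2$, yet every triangle spanned by three hull vertices contains $c_4$ on its boundary (a diagonal), so its distance to the nearest edge is $0$, not $\Omega(D)$. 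This is the planar instance of the Steinitz phenomenon: $2d$ points, not $d+1$, are needed to keep a point in the interior of their convex hull, so no quantitative Carath\'eodory selection of only three vertices can work for an arbitrary prescribed interior point. The order of quantifiers is the issue: you fix $c_4$ first and then look for the triangle; for a fixed $c_4$ the triangle may simply not exist.

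The paper avoids this by reversing the order: it first fixes a \emph{star triangulation} of $\conv(\R)$ from one boundary cell $\sigma_0$, whose ``fat edges'' $\conv(\sigma_i\cup\sigma_j)$ meet at most $O(|\C|\cdot k)$ cells, and then uses the bound $|\C|=O(k^{2/3})$ on the number of vertices of a convex lattice polygon in a $k\times k$ grid to conclude that only $O(k^{5/3})$ cells are touched by these edges. Since there are $\Omega(k^2/\alpha^2)$ rich cells, for $k=\Theta(\alpha^6)$ some rich cell avoids all the edges and hence lies strictly inside one triangle of the triangulation, which is exactly the configuration needed. Any repair of your argument seems to require the same two ingredients you are missing: a bound on the number of hull vertices (without it, the hull could a priori have $\Theta(k)$ vertices and the lines they span could cover too many cells) and a counting step showing that \emph{some} rich cell is separated from all triangulation edges, i.e., choosing the inner cell \emph{after} the outer structure. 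As written, the proposal has a genuine gap at its central step.
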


Before presenting our proof of Lemma~\ref{lem:4rich}, we sketch a simple alternative proof using
a very powerful tool: the Furstenberg-Katznelson theorem, also called \emph{Density Hales-Jewett theorem}.
However, it is not strong enough to yield the quantitative statement in Theorem~\ref{thm:partition}.
In a sufficiently large dense subset of a grid in $\ZZ^d$, for any fixed $d$ and $s$, one can always find
a $s \times s$ grid as a subgrid. 
The case $d=1$ corresponds to a classical result of Szemer\'{e}di~\cite{Sz75}. 
A higher dimensional generalization of Szemer\'{e}di's density theorem was obtained by
Furstenberg and Katznelson \cite{FK78}; see also \cite{N95}.
Their proof uses infinitary methods in ergodic theory.
A more recent combinatorial proof of this statement can be found in~\cite{Polymath12},
but the resulting bound is huge (a tower of $2$'s of polynomial height).

\begin{theorem} {\rm (Furstenberg--Katznelson \cite{FK78}).} \label{thm:FK} 
For all positive integers $d$, $s$ and  every $c>0$, there exists
a positive integer $N=N(d,s,c)$  with the following property: 
every subset $X$ of $\{1,2,\ldots,N\}^d$ of size at least
$c N^d$ contains a homothetic copy of $\{1,2,\ldots,s\}^d$.
\end{theorem}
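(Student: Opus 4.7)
The plan is to apply Furstenberg's correspondence principle to recast the combinatorial density statement as a multiple recurrence problem for commuting measure-preserving transformations, and then to prove the corresponding ergodic statement via the Furstenberg--Zimmer structure theorem. Arguing by contradiction, suppose that for some fixed $c,d,s$ no such $N$ exists; then there is a sequence $X_{N_i}\subseteq\{1,\ldots,N_i\}^d$ with $|X_{N_i}|\geq c N_i^d$, none of which contains a homothetic copy of $\{1,\ldots,s\}^d$. Passing to a weak-$*$ limit of the shift-invariant empirical measures on $\{0,1\}^{\ZZ^d}$ induced by the $X_{N_i}$ yields a probability space $(Y,\mathcal{B},\mu)$ carrying $d$ commuting measure-preserving transformations $T_1,\ldots,T_d$ and a set $E\subseteq Y$ with $\mu(E)\geq c$, in such a way that the absence of homothetic copies translates into
\[
\mu\!\left(\bigcap_{u\in\{0,\ldots,s-1\}^d} S_u^{-n} E\right)=0 \quad \text{for every } n\geq 1,
\]
where $S_u := T_1^{u_1}\cdots T_d^{u_d}$ is one of $s^d$ pairwise commuting transformations.

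To contradict this, one proves the Furstenberg--Katznelson multiple recurrence theorem: for any finitely many commuting measure-preserving transformations $R_1,\ldots,R_\ell$ on a probability space and any set $E$ with $\mu(E)>0$, one has $\liminf_N \frac{1}{N}\sum_{n=1}^{N}\mu(R_1^{-n}E\cap\cdots\cap R_\ell^{-n}E)>0$; applied to $\ell=s^d$ and $R_u=S_u$, this forces a common return time $n\geq 1$ with positive-measure intersection, giving the desired contradiction. The strategy is the classical one of Furstenberg--Katznelson: declare a system to be \emph{SZ} when the above liminf is positive for every such $E$ and every finite commuting family, and then show that (i) the trivial one-point system is SZ, (ii) inverse limits of SZ systems are SZ, (iii) compact extensions of SZ systems are SZ, and (iv) weakly mixing extensions of SZ systems are SZ. The Furstenberg--Zimmer structure theorem represents every separable probability measure-preserving $\ZZ^d$-action as a (possibly transfinite) tower of compact and weakly mixing extensions over the trivial factor, so every system is SZ, completing the reduction.

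The principal obstacle is step (iii) in the genuinely multi-transformation regime. Furstenberg's original single-transformation argument resolves fibers via a van der Waerden coloring input on a compact group, but with $s^d$ commuting $S_u$ acting simultaneously that direct approach fails: the off-diagonal correlations among the $S_u$ must be controlled jointly. The standard workaround is a van der Corput $L^2$ inequality adapted to $\ZZ^d$-actions, combined with a ``recurrence inside recurrence'' argument on conditional measures over the relative compact factor; Furstenberg--Katznelson implement this through IP-systems and primitive extensions. Step (iv) is comparatively soft: PET-style induction exploits the weak mixing hypothesis to decouple correlations so that the joint measure tends to $\mu(E)^{\ell+1}$ in mean. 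As the excerpt notes, this route yields an ineffective $N(d,s,c)$; the Polymath alternative derives the theorem from the Density Hales--Jewett theorem by mapping combinatorial lines in $[s]^N$ onto homothetic copies of $\{1,\ldots,s\}^d$ via a block-coordinate collapse, recovering an effective but tower-type bound.
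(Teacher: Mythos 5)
The paper does not prove Theorem~\ref{thm:FK}: it is imported as a black box from Furstenberg--Katznelson~\cite{FK78} and used only to give a second, non-quantitative proof of Lemma~\ref{lem:4rich}, so there is no in-paper argument to compare against. Your outline is a faithful description of the proof in the cited source. The correspondence-principle reduction to multiple recurrence for the $s^d$ commuting transformations $S_u=T_1^{u_1}\cdots T_d^{u_d}$, $u\in\{0,\ldots,s-1\}^d$, is set up correctly (the common return time $n\geq 1$ plays the role of the homothety ratio, and the fact that $S_0$ is the identity is harmless), and the induction over the Furstenberg--Zimmer tower --- trivial factor, inverse limits, compact extensions, weakly mixing extensions --- is the right architecture, with the compact-extension step correctly identified as the crux in the commuting setting. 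Two caveats. First, what you have written is an outline rather than a proof: the relative van der Corput estimate, the primitive-extension dichotomy for $\ZZ^\ell$-actions, and the structure theorem itself are named but not carried out, and each is a substantial piece of work; this is acceptable here only because the paper itself treats the theorem as external. Second, two minor attribution slips: IP-systems belong to the later (1985) Furstenberg--Katznelson IP-Szemer\'edi paper rather than to~\cite{FK78}, which handles commuting transformations via primitive extensions; and ``PET induction'' is Bergelson's polynomial technique, not needed for the linear weak-mixing step, where the van der Corput lemma alone suffices. Your closing remark correctly mirrors the paper's own observation that this route yields an ineffective $N(d,s,c)$, whereas the combinatorial proof via the density Hales--Jewett theorem~\cite{Polymath12} gives an explicit but tower-type bound.
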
 
\begin{figure}[htbp]
\centering
\includegraphics[scale=0.7]{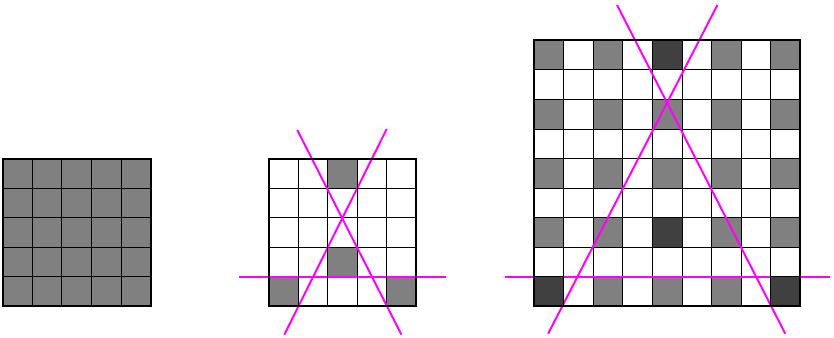}
\caption {Left: A $5 \times 5$ subgrid of rich cells (shaded) and a relevant subset of four cells.
  \linebreak
Right: A~$5 \times 5$ subgrid of rich cells with some separation.}
\label{fig:5x5}
\end{figure}

To deduce Lemma~\ref{lem:4rich}, apply Theorem~\ref{thm:FK} with $d=2$, $s=5$, and $c=1/(3 \alpha^2)$,
to the set $\Sigma$ of cells in $Q$ and its subset $\R$ of rich cells.
That is, let $k:= N(2,5,1/(3 \alpha^2))$. By~\eqref{eq:parameters},
if $n$ is large enough, this setting ensures the existence of a $5 \times 5$ subgrid of rich cells.
Fig.~\ref{fig:5x5} shows the four selected rich cells satisfying the requirements in Lemma~\ref{lem:4rich}.
Note that a separation between subgrid cells, if any, does not interfere with the result.

\subparagraph{Proof of Lemma~\ref{lem:4rich}.}
  Let $P=\conv(\R)$. Note that $P$ is a lattice polygon whose vertices are in the
  $(k+1) \times (k+1)$ grid $\G$ subdividing $Q$.
  Let $\C \subset \R$ denote the set of rich cells incident to vertices of $P$:
  we have |$\C| \leq v(P)$.  By a well-known result,
  $P$ has
\begin{equation} \label{eq:grid-gon}
  v(P) \leq c' k^{2/3}
\end{equation}
  vertices in $\G$, where $c'>0$ is a suitable constant;
  see, \eg, \cite[Exercise 2, p.~34]{Mat02}.
  (A more precise estimate on the number of vertices was given by
  Acketa and \v{Z}uni{\'c}~\cite{AZ95}:
  \begin{equation}  \label{eq:v(P)}
    v(P) \leq 12 (4 \pi^2)^{-1/3} k^{2/3} +O(k ^{1/3} \log{k}).
  \end{equation}
  However, here we need a non-asymptotic upper bound.)

Choose an arbitrary element of $\C$, say a leftmost one, and denote it by $\sigma_0$.
Label the remaining elements of $\C$ in clockwise order around the boundary of $P$
as $\sigma_1,\sigma_2,\ldots,\sigma_{|\C|-1}$.
  Consider the convex sets $\tau_1,\tau_2,\ldots,\tau_{2|\C|-3}$ defined as follows:
\begin{align} \label{eq:tau}
  \tau_j &= \conv(\sigma_0 \cup \sigma_j), \ \ j=1,2,\ldots,|\C|-1, \\
  \tau_{|\C|+j-1} &= \conv(\sigma_j \cup \sigma_{j+1}), \ \ j=1,\ldots,|\C|-2, \text{ and let} \\
    K &= \bigcup_{j=1}^{2|\C|-3} \tau_j.
\end{align}

We refer to $K$ as the \emph{star triangulation} from the boundary cell $\sigma_0$.
Let $\S$ denote the set of segments that appear on the boundaries of $\tau_1,\tau_2,\ldots,\tau_{2|\C|-3}$.
See Fig.~\ref{fig:grid} for an example.

\begin{figure}[htbp]
\centering
\includegraphics[scale=0.31]{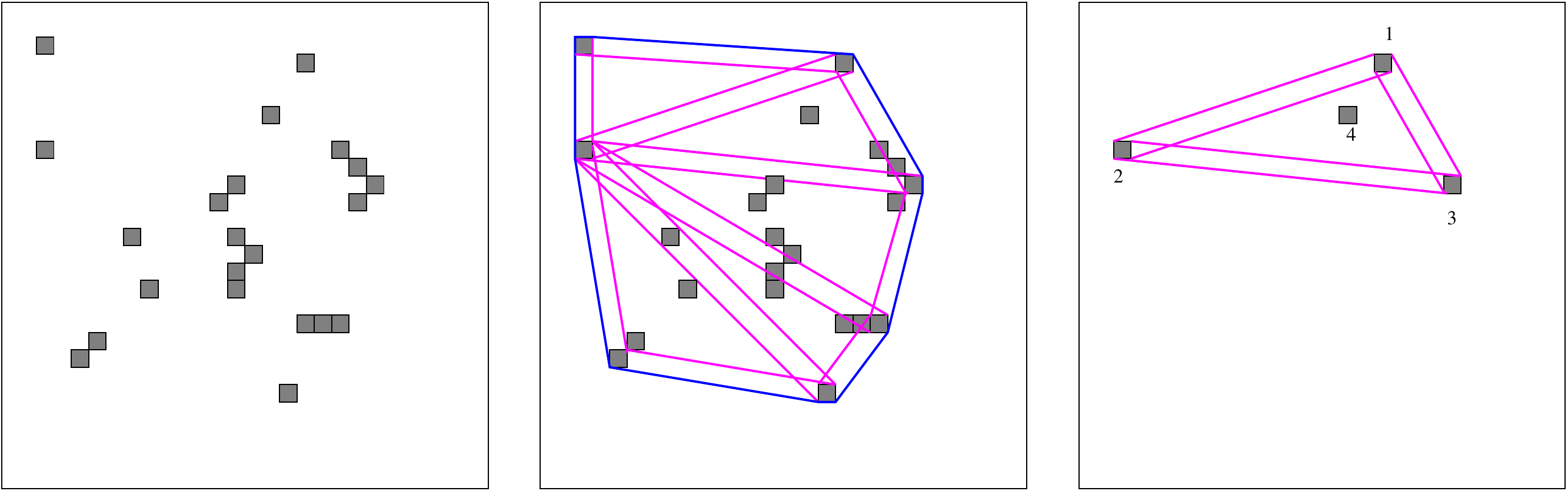}
\caption{Left: The set of rich cells in $Q$ (each rich cell is shaded).
  Center: the star triangulation~$K$ from a boundary cell in $\C$. Here $|\R|=22$ and $|\C|=7$.
  Segments in $\S$ are in bold lines.
  Right: a set of four rich cells as in Lemma~\ref{lem:4rich}.
}
\label{fig:grid}
\end{figure}

\begin{claim} \label{claim:intersect}
  The segments in $\S$ intersect at most $8 c' k^{5/3}$ cells in $\Sigma$.
\end{claim}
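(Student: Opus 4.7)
\subparagraph*{Proof plan.} The plan is a simple two-step counting: bound the number of segments in $\S$, then bound the number of cells of $\Sigma$ that a single segment can meet, and multiply.

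For the first step, each $\tau_j$ is the convex hull of two axis-aligned cells and is therefore a convex polygon with at most six sides. Among these, at most two are \emph{tangent} segments lying strictly outside the two constituent cells; the remaining sides lie on the boundaries of the two cells themselves, and each such piece meets only $O(1)$ cells of $\Sigma$. Since there are $2|\C|-3$ pieces $\tau_j$ and, by \eqref{eq:grid-gon}, $|\C|\leq v(P)\leq c'k^{2/3}$, the number of tangent segments in $\S$ is at most $2(2|\C|-3)\leq 4c'k^{2/3}$, while the ``short'' cell-boundary segments contribute only $O(k^{2/3})$ to the final incidence count.

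For the second step, I would invoke the standard grid-crossing estimate: any segment contained in $Q$ has horizontal and vertical spans both at most $\alpha\sqrt{n}$, so in the subdivision of $Q$ into cells of side $\alpha\sqrt{n}/k$ it crosses at most $k$ vertical and $k$ horizontal grid lines, and therefore meets at most $2k+1$ cells of $\Sigma$. Combining the two bounds, the total number of cell--segment incidences is at most $4c'k^{2/3}\cdot(2k+1)+O(k^{2/3})\leq 8c'k^{5/3}$ once the $O(k^{2/3})$ lower-order terms (the ``$+1$'' part and the short cell-boundary sides) are absorbed into the main term; this is harmless because $k=k(\alpha)$ may be taken sufficiently large, consistently with $k(\alpha)\geq 3\alpha^2$. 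Since the number of distinct cells of $\Sigma$ intersected is at most the total incidence count, the claim follows.

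I do not anticipate a substantial obstacle: the bound is essentially forced by the two numerical inputs already in hand, namely $|\C|=O(k^{2/3})$ from \eqref{eq:grid-gon} and the elementary fact that a segment in a $k\times k$ grid of cells meets $O(k)$ cells. The only book-keeping subtlety is separating the ``long'' tangent sides of each $\tau_j$, which drive the bound, from the ``short'' cell-boundary sides, which contribute only a lower-order term, so that the advertised constant $8c'$ comes out cleanly.
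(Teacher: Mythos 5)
Your proof is correct and follows essentially the same route as the paper's: both multiply the bound $|\C|\le c'k^{2/3}$ from \eqref{eq:grid-gon} by an $O(k)$ grid-crossing bound, the only difference being that the paper bounds the cells met by each whole region $\tau_i$ at once (at most $4k$: four per column, or four per row, depending on whether the translation vector between its two cells makes angle at most $45^\circ$ with the $x$-axis), which sidesteps your lower-order terms entirely. One arithmetic nit: $4c'k^{2/3}\cdot(2k+1)=8c'k^{5/3}+4c'k^{2/3}$ strictly exceeds $8c'k^{5/3}$, so positive lower-order terms cannot be ``absorbed'' into a main term that has no slack; to land exactly on the stated constant you should retain the $-6$ in $2(2|\C|-3)=4|\C|-6$, whose contribution $-6(2k+1)$ dominates all your positive lower-order terms for the value of $k(\alpha)$ fixed in \eqref{eq:alpha^6} (or simply note that the precise constant $8c'$ is immaterial to how the claim is used afterwards).
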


We verify the claim.
Observe that for each $i=1,\ldots,2|\C|-3$, the segments in $\S$ associated with $\tau_i$ intersect at most $4k$
cells in $\Sigma$ (recall that $\Sigma$ consists of $k^2$ cells).
Indeed, if the translation vector corresponding to the pair of cells in $\tau_i$ makes an angle of at most $45^\circ$
with the $x$-axis, $\tau_i$ can intersect at most four cells in each column. Otherwise $\tau_i$ can intersect at most
four cells in each row.  Since no point of $A$ lies on a cell boundary and $|\C| \leq c' k^{2/3}$, the claim follows.

\medskip
Since $P=\conv(\R)$, there are no rich cells in the exterior of $P$.
Moreover, every rich cell intersecting some $\tau_i$ intersects a segment in $\S$.
Note that for $k > (24 c' \cdot \alpha^2)^3$ we have
$ \frac{k^2}{3\alpha^2} - 8 c'\, k^{5/3} >0$.  To this end let
\begin{equation} \label{eq:alpha^6}
  k(\alpha) = \lceil (24 c' \cdot \alpha^2)^3 \rceil +1,
\end{equation}
and recall that we have set $k=k(\alpha)$ in the beginning of the proof. 
It follows that there exists at least one rich cell completely inside one of the triangles of the
star triangulation from $\sigma_0$. More precisely, if $\sigma_0,\sigma_j,\sigma_{j+1}$,
is such a triangle (triple) of rich cells and $\sigma'$ is a rich cell inside the triangle,
then
\begin{equation} \label{eq:empty}
\conv(\sigma_0 \cup \sigma_j) \cap \sigma' =\emptyset, \ \
\conv(\sigma_0 \cup \sigma_{j+1}) \cap \sigma' =\emptyset, \ \text{ and } \
\conv(\sigma_j \cup \sigma_{j+1}) \cap \sigma' =\emptyset.
\end{equation}

Setting $\sigma'_1:= \sigma_0$, $\sigma'_2:= \sigma_j$, $\sigma'_3:= \sigma_{j+1}$,
and $\sigma'_4:= \sigma'$, it is now easily verified that
for any four points $a_i \in \sigma'_i \cap A$, $i=1,2,3,4$,
we have $a_4 \in \Delta{a_1 a_2 a_3}$, as required.
\qed

\subparagraph{Final argument.}
We use the point set structure guaranteed by Lemma~\ref{lem:4rich}.
A very similar structure is highlighted and implicitly used in~\cite{PSS23}.
For completeness we include the proof tailored for our structure.

Recall that a cell $\sigma \in \Sigma$ is \emph{rich} if it contains at least $n/(3k^2)$ points in $A$.
Consider four rich cells $\sigma_1,\sigma_2,\sigma_3,\sigma_4$, such that
for any four points $a_i \in \sigma_i \cap A$, $i=1,2,3,4$, we have $a_4 \in \Delta{a_1 a_2 a_3}$.
Let $A_i = A \cap \sigma_i$, for $i=1,2,3,4$. Remove points from each of the four cells, if needed,
until there are exactly $\lceil n/(3k^2) \rceil$ points in $A$ in each of these cells.
Let us denote the resulting sets as $B_i \subseteq A_i$, for $i=1,2,3,4$, where
$|B_1|=|B_2|=|B_3|=|B_4| = m = \lceil n/(3k^2) \rceil$.

Recall that we are in the case $n \geq n_0$ and that
$m = \lceil n/(3k^2) \rceil$, where $k=k(\alpha)$ is a fixed integer.
Applying Lemma~\ref{lem:combined} with $P:=A$ and $Q:=B$
and Lemma~\ref{lem:special}
yields that the edge-set of the complete geometric graph $K_n[A]$ can be decomposed into at most
\begin{equation} \label{eq:combined}
  n-4m+ 3m = n-m \leq \left( 1 - \frac{1}{3k^2} \right) n
\end{equation}
plane subgraphs. Setting $c(\alpha) = 1 - \frac{1}{3k^2(\alpha)}$
and recalling~\eqref{eq:alpha^6} completes the proof of inequality~\eqref{eq:c(alpha)}.
\qed

\subparagraph{Note.}
Next, we show that $c(\alpha) \geq 1/2$ in Theorem~\ref{thm:partition},
\ie, some sets require at least $n/2$ plane subgraphs in the partition.
For simplicity, we give a grid example, where $\alpha \leq \sqrt2 + \eps$,
for a  small $\eps>0$. (A~suitable example can be found for every $\alpha > \alpha_0$.)
Let $n=k^2-1$, where $k=2a+1$. Consider the $n/2$ integer points
with positive $y$-coordinates or zero $y$-coordinate and negative $x$-coordinate in the
lattice section $\{-a,\ldots,a\}^2$, suitably perturbed to avoid collinearities.
Refer to Fig.~\ref{fig:dense}.
\begin{figure}[htbp]
\centering
\includegraphics[scale=1.2]{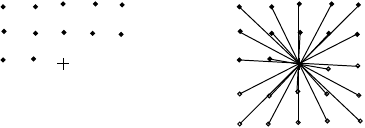}
\caption{A dense set of $24$ points with a crossing family of size $12$. The origin (marked with a cross)
  is not part of the set.}
\label{fig:dense}
\end{figure}
Add to these points the $n/2$ reflections with respect to the origin, suitably perturbed to avoid collinearities.
Observe that the resulting point set has $n$ points and admits a crossing family of size $n/2$
consisting of  $n/2$ edges connecting the $n/2$ initial points with their reflections. 
Consequently, any partition of the corresponding complete geometric graphs into plane subgraphs consists of at
least $n/2$ such subgraphs.

\section{Concluding remarks} \label{sec:remarks}

\textbf{A.} There are many geometric results for finite point sets that can be strengthened
under the assumption that the set is dense, for instance in the case of crossing families or the classic
Erd\H{o}s--Szekeres problem on points in convex position, as explained next; see also~\cite[Ch.~10]{BMP05}.
In $1935$, Erd\H{o}s and Szekeres~\cite{ES35} proved, as one of the first Ramsey-type results in combinatorial geometry,
that every set of $n$ points in general position in the plane contains $\Omega(\log{n})$ points in convex position,
and some $25$ years later showed~\cite{ES60} that this bound is tight up to the multiplicative constant.
According to the current best (asymptotic) upper bound, due to Suk~\cite{Suk17}, every set of $n$ points in general position
in the plane contains $(1-o(1))\log{n}$ points in convex position, and this bound is tight up to lower-order terms.

In contrast, a classic result of Valtr given below specifies a much larger threshold for the maximum size
of a subset in convex position in a density-restricted point set~\cite{Va92}:
  For every $\alpha \geq \alpha_0$ there exists $\beta=\beta(\alpha)>0$ such that any
  set of $n$ points in general position in the plane satisfying $D(A) \leq \alpha n^{1/2}$,
  contains a subset of $\beta n^{1/3}$ points in convex position.
 On the other hand, for every $n \in \NN$ there exists an $n$-element point set $A \subset \RR^2$ in general position,
 satisfying $D(A) = O(n^{1/2})$, in which every subset in convex position has at most $O(n^{1/3})$ points.
 In particular, a suitable small perturbation of the points in a piece of the integer lattice has this property.

 \smallskip
 \textbf{B.}  Pach and Solymosi~\cite{PS99} gave a concise characterization of point sets admitting a cross
ing family of size $n/2$. They showed that a set $P$ of $n$ points in general position in the plane ($n$ even) admits
a perfect matching with pairwise crossing segments if and only if $P$ has precisely $n$ halving lines.
A \emph{halving line} for such a set is a line incident to two points of the set and leaving exactly $n/2 -1$
points in each of the two open halfplanes it determines~\cite[Ch.~8.3]{BMP05}.

As defined by Dillencourt, Eppstein, and Hirschberg~\cite{DEH00}, the \emph{geometric thickness}
of an abstract graph $G$ is the minimum $k \in \NN$ such that $G$ has a drawing as a geometric graph
whose edges can be partitioned into $k$ plane subgraphs. The authors proved that the geometric thickness
of $K_n$ is between $\lceil (n/5.646) + 0.342 \rceil$ and $\lceil n/4 \rceil$.
As pointed out in~\cite{BHRW06}, the difference between Problem~\ref{prob:partition} and determining
the geometric thickness of $K_n$ is that Problem~\ref{prob:partition} deals with all possible drawings of $K_n$
whereas geometric thickness asks for the best drawing.

Decompositions of the edge-set of a complete geometric graph on $n$ points into the minimum number
of families of pairwise disjoint edges (resp., pairwise intersecting edges),
have been studied among others, by Araujo, Dumitrescu, Hurtado, Noy, and Urrutia~\cite{ADH+05}.

Recently, Obenaus and Orthaber~\cite{OO21} gave a negative answer to the question of whether every complete
geometric graph on $n$ vertices ($n$ even) can be partitioned into $n/2$ plane subgraphs. See also~\cite{AOO+22}.
As such, $n/2 +1$ is a lower bound in some instances on the number of such subgraphs in Problem~\ref{prob:partition}.

\smallskip
\textbf{C.}  If $X$ is a finite point set in the plane, every point of $\conv(X)$ can be expressed as a convex combination
of at most $3$ points in $X$. This implies that every point set in general position that is not in convex position contains
a subset of $4$ points that are not in convex position, \ie,  a four-tuple $a,b,c,d \in X$ such that $d \in \Delta{abc}$.

Our Theorem~\ref{thm:partition} gives the following quantitative version of Carath\'eodory's Theorem
for $\alpha$-dense sets.

\begin{corollary} \label{cor:4-tuples}
  Let $A$ be a set of $n$ points in the plane, with $D(A) \leq \alpha n^{1/2}$, for some $\alpha \geq \alpha_0$.
  Then there exist at least $c n^4$ four-tuples $a_1,a_2,a_3,a_4 \in A$ such that $a_4 \in \Delta{a_1 a_2 a_3}$,
  where $c=c(\alpha)>0$ is a constant.
\end{corollary}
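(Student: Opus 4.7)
The plan is to derive Corollary~\ref{cor:4-tuples} directly from Lemma~\ref{lem:4rich} by a simple product counting. That lemma provides four distinct rich cells $\sigma'_1,\sigma'_2,\sigma'_3,\sigma'_4$ with the strong property that \emph{every} choice of $a_i\in\sigma'_i\cap A$ ($i=1,2,3,4$) produces a four-tuple satisfying $a_4\in\Delta{a_1a_2a_3}$. Thus one need not search for individual four-tuples one at a time; an entire Cartesian product of valid four-tuples is already at hand.

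Concretely, I would first treat the nontrivial regime $n\geq n_0(\alpha)$ (with $n_0$ as in~(\ref{eq:parameters})) and apply Lemma~\ref{lem:4rich} with $k=k(\alpha)$ from~(\ref{eq:alpha^6}). By the definition of a rich cell, each $\sigma'_i$ contains at least $n/(3k^2)$ points of $A$, and the four cells are pairwise disjoint. Hence the number of ordered four-tuples $(a_1,a_2,a_3,a_4)$ with $a_i\in\sigma'_i\cap A$---all of which satisfy $a_4\in\Delta{a_1a_2a_3}$ by Lemma~\ref{lem:4rich}---is at least
\[
\prod_{i=1}^{4}|A\cap\sigma'_i|\;\geq\;\Bigl(\frac{n}{3k^2}\Bigr)^{\!4}\;=\;\frac{n^4}{\bigl(3k(\alpha)^2\bigr)^4}\;=\;c(\alpha)\,n^4,
\]
with $c(\alpha)=\bigl(3k(\alpha)^2\bigr)^{-4}=\Omega(\alpha^{-48})$. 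For the residual range $n<n_0(\alpha)$, we are dealing with a bounded number of configurations (depending only on $\alpha$), so the constant $c(\alpha)$ can be shrunk if necessary to make the inequality trivial there.

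The main obstacle is very minor, as Lemma~\ref{lem:4rich} does the heavy lifting: it delivers not merely one four-tuple with the Carath\'eodory-type containment, but an entire product family of valid four-tuples, one from each rich cell. The only slack in this argument is that we fix a specific cell for the ``interior'' point $a_4$; by cycling through the four possible roles one could improve the constant by a factor of up to~$4$, but this is immaterial for the qualitative $\Omega(n^4)$ bound asserted by the corollary.
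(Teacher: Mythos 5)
Your proposal is correct and is essentially the paper's own proof: both apply Lemma~\ref{lem:4rich} to obtain four rich cells and count the Cartesian product of their point sets, giving at least $(n/(3k^2))^4 = n^4/(81k^8)$ valid ordered four-tuples and hence the same constant $c(\alpha)=3^{-4}k^{-8}(\alpha)$. The only (shared) loose end is the range $n<n_0(\alpha)$, where a small dense set can be in convex position and contain zero such four-tuples, so the statement must be read for $n$ sufficiently large (as the paper does by assuming ``$n$ is large enough''); merely shrinking $c$ does not repair that case.
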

\begin{proof}
  We may assume that $n$ is large enough.
Recall that $m = \lceil n/(3k^2) \rceil$, where $k=k(\alpha)$ is a fixed integer.
By Lemma~\ref{lem:4rich}, there exist four rich cells $\sigma'_1,\sigma'_2,\sigma'_3,\sigma'_4$,
such that for any four points $a_i \in \sigma'_i \cap A$, $i=1,2,3,4$, we have $a_4 \in \Delta{a_1 a_2 a_3}$.
Consequently, the number of (ordered) $4$-tuples with this property is at least
$ m^4 \geq n^4/(81 k^8)$. Setting $c(\alpha) = 3^{-4} k^{-8}(\alpha)$ completes the proof
of the lower bound. On the other hand, the total number of such $4$-tuples is clearly less than $n^4$.
\end{proof}

\smallskip
\textbf{D.} The proof of Corollary~\ref{cor:random} is straightforward.
  Subdivide $U=[0,1]^2$ into $25$ smaller axis-parallel squares as in Fig.~\ref{fig:random}.
  Consider the four subsquares: $\sigma_1=[0,1/5]^2$, $\sigma_2=[4/5,1] \times [0,1/5]$,
  $\sigma_3=[2/5,3/5] \times [4/5,1]$, and $\sigma_4=[2/5,3/5] \times [1/5,2/5]$.
\begin{figure}[htbp]
\centering
\includegraphics[scale=0.7]{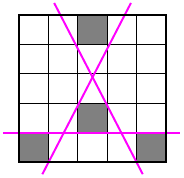}
\caption{The four distinguished subsquares are shaded.}
\label{fig:random}
\end{figure}

The expected number of points in each subsquare is $n/25$.
With probability tending to $1$ as $n \to \infty$, each of the four subsquares contains at least
$n/50$ points in $A$.

Observe that any line connecting a point in $\sigma_1$ with a point in $\sigma_3$ leaves $\sigma_4$ below.
By symmetry, any line connecting a point in $\sigma_2$ with a point in $\sigma_3$ leaves $\sigma_4$ below.
Third, any line connecting a point in $\sigma_1$ with a point in $\sigma_2$ leaves $\sigma_4$ above.
As such, a structure analogous to that in Lemma~\ref{lem:4rich} is obtained, and the corollary follows.
\qed

 \smallskip
 \textbf{E.}  Can the dependency of $c(\alpha)$ on $\alpha$ in~\eqref{eq:c(alpha)} be improved?
 Or completely eliminated?

\end{document}